\newtheorem{theorem}{Theorem}[section]
\newtheorem{lemma}[theorem]{Lemma}
\newtheorem{corollary}[theorem]{Corollary}
\newtheorem{claim}{Claim}[theorem]
\newtheorem*{sg}{Sylvester-Gallai Theorem}
\newtheorem*{kelly}{Kelly's Theorem}
\newtheorem*{grt}{Geometric Ramsey Theorem}
\newtheorem*{ghjt}{Geometric Hales-Jewett Theorem}
\newcommand{\del}{\setminus}
\newcommand{\con}{/}
\DeclareMathOperator{\si}{si}
\DeclareMathOperator{\cl}{cl}
\DeclareMathOperator{\HJ}{HJ}
\DeclareMathOperator{\R}{R}
\DeclareMathOperator{\AG}{AG}
\DeclareMathOperator{\PG}{PG}
\DeclareMathOperator{\GF}{GF}
\newcommand{\bF}{\mathbb{F}}
\begin{document}

\title{Unavoidable Flats in Matroids Representable over Prime Fields}

\author{Jim Geelen}
\address{Department of Combinatorics and Optimization, University of Waterloo, Waterloo, Canada} \author{Matthew E. Kroeker}
\address{Department of Combinatorics and Optimization, University of Waterloo, Waterloo, Canada} 

\thanks{This research was partially supported by grants from the Office of Naval Research [N00014-10-1-0851] and NSERC [203110-2011] and by an NSERC Postgraduate Scholarship [Application No. PGSD3 - 547508 - 2020]}

\begin{abstract}
We show that, for any prime $p$ and integer $k \geq 2$, a simple $\GF(p)$-representable matroid with sufficiently high rank has a rank-$k$ flat which is either independent in $M$, or is a projective or affine geometry.  As a corollary we obtain a Ramsey-type theorem for $\GF(p)$-representable matroids. For any prime $p$ and integer $k\ge 2$, if we $2$-colour the elements in any 
simple $\GF(p)$-representable matroid with sufficiently high rank, then there is a monochromatic flat with rank $k$.
\end{abstract}

\maketitle

\sloppy

\section{introduction}

In 1944, Gallai proved the following classical result~\cite{Gallai}, originally conjectured by Sylvester fifty years prior~\cite{Sylvester}.

\begin{sg}
Every rank-$3$ real-representable matroid has a two-point line.
\end{sg}

Much attention has been given to the question of how to meaningfully generalize the Sylvester-Gallai Theorem, as well as whether similar phenomena occur in more abstract geometric settings. The following result of Kelly~\cite{Kelly} is a particularly famous example of the latter.

\begin{kelly}
Every rank-$4$ complex-representable matroid has a two-point line.
\end{kelly}

Note that Kelly's rank bound is best-possible: for example, the ternary affine plane is well known to be complex-representable. For more information on the Sylvester-Gallai Theorem and its various extensions and generalizations, see Borwein and Moser's survey~\cite{BoMo}. In this paper, we are  motivated by Sylvester-Gallai-type problems which assert the existence independent flats of rank greater than two. For instance, under what conditions is a matroid guaranteed to contain a three-point plane? For the case of real-representability, Bonnice and Edelstein~\cite{BoEd} deduced the following result as a corollary of Hansen's Theorem~\cite{Hansen}.

\begin{theorem}[\cite{BoEd}, \cite{Hansen}]\label{hansen}
For an integer $k \geq 2$, if $M$ is a simple rank-$(2k-1)$ real-representable matroid, then $M$ contains a rank-$k$ independent flat.
\end{theorem}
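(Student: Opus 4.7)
The plan is to invoke Hansen's theorem, which extends Sylvester-Gallai to higher-rank real-representable configurations. I will use it in the following form: any simple real-representable matroid $M$ of rank $r$ contains an \emph{ordinary hyperplane}, meaning a rank-$(r-1)$ flat $H$ with $|H \cap E(M)| = r-1$. Establishing this is where all of the real geometric difficulty lies, and I treat it as the cited black-box input.

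Applying Hansen's theorem to $M$ with $r = 2k-1$, I obtain a rank-$(2k-2)$ flat $H$ meeting $E(M)$ in exactly $2k-2$ points, say $I = \{p_1, \dots, p_{2k-2}\}$. Because $H$ is a flat, $\cl(I) = H$, and so $I$ spans a flat of rank $2k-2 = |I|$; hence $I$ is independent in $M$.

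To extract the desired flat, I choose any $k$-element subset of $I$, say $\{p_1, \dots, p_k\}$, and set $F = \cl(\{p_1,\dots,p_k\})$. Then $F$ is a rank-$k$ flat of $M$ contained in the flat $H$, so $F \cap E(M) \subseteq I$. Independence of $I$ forbids any $p_j$ with $j > k$ from lying in $F$, since otherwise $\{p_1, \dots, p_k, p_j\} \subseteq I$ would be dependent. Therefore $F = \{p_1, \dots, p_k\}$ is itself a rank-$k$ independent flat, completing the proof.

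The main obstacle is securing Hansen's theorem in this exact formulation; the extraction step above is a short counting argument once the ordinary hyperplane is in hand. If one preferred to avoid invoking Hansen as a black box, the natural alternative would be induction on $k$ with base case $k=2$ supplied by Sylvester-Gallai: given a rank-$(k-1)$ independent flat $F$, one would try to extend it to a rank-$k$ independent flat by locating a point $p \in E(M) \setminus F$ whose parallel class in $M/F$ is a singleton. However, showing that such a $p$ must exist seems to require essentially the same real-geometric content as Hansen's theorem itself, so routing the proof through Hansen directly appears most efficient.
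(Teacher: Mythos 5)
The paper does not prove Theorem~\ref{hansen}; it is quoted as a known result of Bonnice and Edelstein, derived from Hansen's theorem. Your reduction, however, rests on a formulation of Hansen's theorem that is not Hansen's theorem and is in fact false. You assert that every simple rank-$r$ real-representable matroid has an \emph{independent} hyperplane, i.e.\ a rank-$(r-1)$ flat containing exactly $r-1$ points. The direct sum of two three-point lines (rank $4$) already refutes this: every hyperplane is the union of one full line and a point of the other, hence has four points and is dependent. More tellingly, your argument never really uses the hypothesis $r(M)=2k-1$ beyond $2k-2\ge k$, so if your black box were true the conclusion would hold at much lower rank --- contradicting the tightness example the paper itself gives (the direct sum of $k-1$ lines, of rank $2k-2$, has no rank-$k$ independent flat, let alone an independent hyperplane).

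What Hansen actually proved is much weaker: every finite spanning point set in real projective $d$-space determines an \emph{ordinary hyperplane}, meaning a hyperplane $H$ such that all but one of the points of $H$ lie in a flat of rank $r(H)-1$. Extracting Theorem~\ref{hansen} from this requires the Bonnice--Edelstein induction: take an ordinary hyperplane $H$ with exceptional point $p$ and codimension-one subflat $G\subseteq H$ containing the remaining points; apply the inductive hypothesis to $M|G$, which has rank $2(k-1)-1$, to get a rank-$(k-1)$ independent flat $I$; then check that $\cl_M(I\cup\{p\})$ meets $E(M)$ only in $I\cup\{p\}$. It is this two-ranks-per-step descent that produces the bound $2k-1$. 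Your final ``extraction'' paragraph (restricting to a $k$-subset of an independent flat and checking it is still a flat) is correct as far as it goes, but the input it needs does not exist, so the proof has a genuine gap at its only substantive step.
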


The bound on the rank in Theorem~\ref{hansen} is tight, as can be seen by considering the direct sum of $k-1$ lines. This result was extended to complex-representable matroids by Barak, Dvir, Wigderson and Yehudayoff~\cite{Barak} with weaker bounds. Those bounds have since been tightened (see~\cite{Dvir,GeKr}), but there is still room for further improvement.

Projective geometries show that the Sylvester-Gallai theorem does not hold for matroids in general nor even for matroids representable over finite fields. However,  Murty~\cite{Murty} showed that any simple rank-$r$ matroid with fewer than $2^{r}-1$ points has a two-point line, and additionally that the binary projective geometry is the smallest such matroid with no two-point line. A stronger result for matroids representable over prime fields was proved by Bhattacharyya, Dvir, Shpilka, and Saraf~\cite{BDSS} who showed that, for a prime $p$ and sufficiently large integer $r$, any rank-$r$ $\GF(p)$-representable matroid with no two-point line has at least $p^{\Omega(r)}$ points.

In this paper we prove an analogue to Theorem~\ref{hansen} for the class of $\GF(p)$-representable matroids, where $p$ is prime. Our approach is to determine the unavoidable rank-$k$ flats in $\GF(p)$-representable matroids with sufficiently high rank. The following is our main result. 

\begin{theorem}\label{unavoidable}
For every prime $p$ and integer $k \geq 1$ there is an integer $N_{p}(k)$ such that, if $M$ is a simple $\GF(p)$-representable matroid with $r(M) \geq N_{p}(k)$, then $M$ contains a rank-$k$ flat $F$ such that either $F$ is independent in $M$, $M|F \cong \AG(k-1,p)$, or $M|F \cong \PG(k-1,p)$. 
\end{theorem}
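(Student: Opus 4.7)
Plan. The proof goes by induction on $k$. The case $k = 1$ is immediate; for $k = 2$, I would combine Kung's bound on line-length-bounded matroids with the Bhattacharyya--Dvir--Shpilka--Saraf lower bound ($|M| \ge p^{\Omega(r(M))}$ when $M$ has no two-point line): for $r(M)$ large, these two estimates are incompatible unless $M$ has a two-point line or a line of size $p$ or $p + 1$, giving a rank-$2$ flat of the required type.

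For the inductive step, assume the result for $k - 1$ and take $M$ with $r(M) \ge N_p(k)$ for a sufficiently large $N_p(k)$. By induction there is a rank-$(k-1)$ flat $F_0$ of $M$ of one of the three types. I would then pass to $M' := \si(M / F_0)$, a simple $\GF(p)$-representable matroid of rank $r(M) - (k - 1)$, which is still very large. Each point $e$ of $M'$ corresponds to a parallel class of $M / F_0$ of some size $s(e) \in \{1, \ldots, p^{k-1}\}$ and to a rank-$k$ flat $F_e := \cl_M(F_0 \cup \{e\})$ of $M$ containing $F_0$; the isomorphism type of $M|F_e$ is controlled by $s(e)$ together with the position of the parallel class relative to $F_0$ in the ambient projective space.

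A case analysis on the type of $F_0$ then yields the result. If $F_0$ is independent, any point $e$ with $s(e) = 1$ produces a rank-$k$ independent flat. If $F_0 \cong \PG(k-2, p)$, then $F_0$ contributes $(p+1)$-point lines to any extension, forcing $M|F_e$ to be projective, which occurs precisely when $s(e) = p^{k-1}$; if $F_0 \cong \AG(k-2, p)$, the hyperplane at infinity of $F_0$ in the ambient $\PG$ is missing from $M$, so projective extensions are impossible, and we need a parallel class of the specific size and position yielding $\AG(k-1, p)$. If no single $e$ works, I would colour the points of $M'$ by (a bounded invariant of) the isomorphism type of $M|F_e$ and invoke the Geometric Hales--Jewett Theorem (Graham--Leeb--Rothschild) on $M'$ to extract a monochromatic rank-$t$ subspace for large $t$; the common structure on this subspace then either produces the desired rank-$k$ flat of $M$ directly or licenses an inductive restart on a ``stronger'' base flat $F_0'$ replacing $F_0$.

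The main obstacle is this last saturation step: ruling out the scenario in which every $e \in M'$ gives only a partial extension, attaining neither the small extreme ($s = 1$) nor the relevant large extreme ($s = p^{k-2}(p-1)$ for an $\AG$ base or $s = p^{k-1}$ for a $\PG$ base). The Geometric Hales--Jewett argument appears indispensable here, but must be combined with an iterative upgrade of $F_0$---replacing a free $F_0$ by an affine or projective flat when its extensions are too dense, or promoting an affine $F_0$ to a larger affine flat---so as to force saturation or build a better base flat and restart the induction. A secondary technicality in the $\AG$ case is that one needs correct alignment of the extension with $F_0$'s hyperplane at infinity in the ambient $\PG$, not merely the correct count.
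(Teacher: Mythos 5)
Your plan takes a genuinely different route from the paper (induction on $k$, extending a rank-$(k-1)$ flat of one of the three types by a single point), and it has a real gap at precisely the step you identify as the main obstacle. When no point $e$ of $M'=\si(M\con F_0)$ attains the required parallel-class size $s(e)$, your only proposed tool is to colour the points of $M'$ and apply the Geometric Hales--Jewett / Graham--Leeb--Rothschild theorems. But those theorems apply only when the ground matroid is itself a full affine or projective geometry; $M'$ is an arbitrary simple $\GF(p)$-representable matroid, and a monochromatic-flat statement for such matroids is exactly Corollary~\ref{Ramsey}, which is deduced \emph{from} the theorem you are trying to prove --- so this step is circular. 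The fallback of ``iteratively upgrading'' $F_0$ is not specified: nothing in the proposal produces a better base flat when every parallel class has size strictly between the extremes, and there is no measure guaranteeing the restart terminates. The base case $k=2$ is also not secure: the Bhattacharyya--Dvir--Shpilka--Saraf bound is $p^{\Omega(r)}$ with an unspecified constant, while the Kung-type upper bound for matroids with all lines of length at most $p-1$ is roughly $(p-2)^{r}$; both are exponential and need not conflict, so long lines (length $p$ or $p+1$) are not forced this way.

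For comparison, the paper avoids extending typed flats altogether. It proves a two-parameter strengthening and inducts on the rank $t$ of the sought independent flat. The engine is a Reid-geometry argument (Lemmas~\ref{kelly}--\ref{kelly2}): if every rank-$t$ flat is connected and has many points, then all lines through a suitable element have exactly $p+1$ points, and iterating this builds an $\AG(k-1,p)$-restriction outright; otherwise one finds a small or disconnected rank-$t$ flat, and the disconnected case yields the independent flat by induction. The Ramsey theorems are applied only to genuine affine and projective geometries --- to upgrade an affine restriction to an affine or projective \emph{flat}, and to pull affine geometries through coextensions (Lemma~\ref{affine}). Your proposal never engages with how an affine or projective geometry arises in the first place, which is the heart of the paper's argument.
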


Since affine and projective geometries both contain $p$-point lines, if we preclude $p$-point lines, then independent rank-$k$ flats become unavoidable. We can also avoid  affine and projective geometries by considering matroids with girth at least five; note that $\AG(n-1,2)$ has girth four.

\begin{corollary}
For every prime $p$ and integer $k \geq 1$, if $M$ is a simple $\GF(p)$-representable matroid with sufficiently high rank and girth at least five, then $M$ contains a rank-$k$ independent flat.
\end{corollary}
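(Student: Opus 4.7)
The plan is to invoke Theorem~\ref{unavoidable} directly with the given $k$, taking the threshold for the corollary equal to $N_p(k)$. This produces a rank-$k$ flat $F$ of $M$ that is either independent, isomorphic to $\AG(k-1,p)$, or isomorphic to $\PG(k-1,p)$. In the first case we are done, so the task reduces to eliminating the two geometric possibilities using the girth hypothesis. For $k = 1$ the conclusion is immediate (any single element is a rank-$1$ independent flat), so I will assume $k \geq 2$.

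The key observation is that both $\AG(k-1,p)$ and $\PG(k-1,p)$ contain a circuit of length at most four in essentially every case. Concretely: every line of $\PG(k-1,p)$ has $p+1 \geq 3$ points, so any three collinear points form a triangle; every line of $\AG(k-1,p)$ has $p$ points, so for $p \geq 3$ we again obtain a triangle; and for $p = 2$ with $k \geq 3$, the restriction of $F$ to any affine $2$-flat yields a copy of $\AG(2,2) \cong U_{3,4}$, which is a $4$-circuit. In each of these subcases, $F$ witnesses a circuit of length at most four in $M$, contradicting the assumption that $M$ has girth at least five.

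The only case outside this inventory is $p = 2$ and $k = 2$: here $\AG(1,2)$ consists of two independent points and is itself a rank-$2$ independent flat, so the desired conclusion holds without invoking the girth assumption. There is no serious obstacle to this argument; the main thing to verify is the short-circuit inventory of small projective and affine geometries, and in particular to notice that $\AG(1,2)$ is the one configuration arising in the output of Theorem~\ref{unavoidable} that happens to be independent on its own and so needs to be handled separately from the girth-based contradiction.
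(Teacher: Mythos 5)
Your proposal is correct and follows the same route the paper intends: apply Theorem~\ref{unavoidable} and rule out the affine and projective outcomes by exhibiting a circuit of length at most four (a triangle from a $p$- or $(p+1)$-point line when $p\ge 3$ or in $\PG(k-1,2)$, and a $4$-circuit in $\AG(2,2)$ when $p=2$), which is exactly the content of the paper's remark that these geometries contain $p$-point lines and that $\AG(n-1,2)$ has girth four. Your explicit treatment of the degenerate case $p=2$, $k=2$, where $\AG(1,2)$ is already an independent flat, is a welcome detail the paper leaves implicit.
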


Another consequence of Theorem~\ref{unavoidable} is a Ramsey theorem for $\GF(p)$-representable matroids. 
\begin{corollary}\label{Ramsey}
For a prime $p$ and integer $k\ge 1$, if $M$ is a simple $\GF(p)$-representable matroid with sufficiently large rank then, then for any $2$-colouring of the points of $M$, there is a monochromatic rank-$k$ flat.
\end{corollary}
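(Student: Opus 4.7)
The plan is to reduce this Ramsey-type statement to Theorem~\ref{unavoidable} combined with known Ramsey results inside projective and affine geometries over $\GF(p)$. First I would choose an integer $k' \geq 2k-1$ large enough that every $2$-colouring of $\PG(k'-1,p)$ admits a monochromatic rank-$k$ projective subspace (by the Geometric Ramsey Theorem, i.e.\ the theorem of Graham--Leeb--Rothschild), and every $2$-colouring of $\AG(k'-1,p)$ admits a monochromatic rank-$k$ affine subspace (by the affine analogue, the Geometric Hales--Jewett Theorem). I would then define the threshold in Corollary~\ref{Ramsey} by recycling the bound from Theorem~\ref{unavoidable} at rank $k'$; any simple $\GF(p)$-representable matroid $M$ of rank at least this threshold contains a rank-$k'$ flat $F$ which is either independent in $M$, or satisfies $M|F \cong \AG(k'-1,p)$, or satisfies $M|F \cong \PG(k'-1,p)$.

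Given any $2$-colouring of the elements of $M$, I would restrict to $F$ and treat each case separately. If $F$ is independent, then since $|F| = k' \geq 2k-1$ pigeonhole yields $k$ monochromatic elements of $F$; any such $k$-element subset is independent, and is a flat of $M$ because no element of $F$ lies in the closure of a proper independent subset and no element outside $F$ lies in the closure of any subset of $F$ (since $F$ is already a flat). If instead $M|F$ is an affine or projective geometry, the relevant Ramsey-type theorem applied to the induced $2$-colouring of $F$ produces a monochromatic rank-$k$ affine or projective subspace. This subspace is a flat of $M|F$, and a short verification shows that any flat of $M|F$ is a flat of $M$ whenever $F$ is itself a flat of $M$ (elements inside $F$ are handled by the $M|F$-closure condition, and elements outside $F$ are excluded because $F$ is a flat).

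Essentially all of the work in this corollary is absorbed into Theorem~\ref{unavoidable} and the classical Graham--Leeb--Rothschild-style results for affine and projective spaces over $\GF(p)$; there is no genuine obstacle here beyond choosing $k'$ appropriately and the routine observation that flats of $M|F$ are flats of $M$ when $F$ is a flat. The whole argument is then a three-way case analysis driven by the trichotomy of Theorem~\ref{unavoidable}.
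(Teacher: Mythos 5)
Your proposal is correct and follows essentially the same route the paper sketches: apply Theorem~\ref{unavoidable} at a suitably large rank $k'$, then handle the independent case by pigeonhole (taking $k'\ge 2k-1$), the projective case by the Graham--Leeb--Rothschild theorem, and the affine case by the Hales--Jewett theorem, using the routine fact that flats of $M|F$ are flats of $M$ when $F$ is a flat.
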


Note that, by Theorem~\ref{unavoidable}, to prove Corollary~\ref{Ramsey} it suffices to consider the cases where
$M$ is a large independent set or $M$ is a high rank affine or projective geometry.
For independents sets we get a monochromatic flat by majority, and 
for projective and affine geometries we use  
known Ramsey theorems; see Section~3.

\section{Building on Kelly's Proof}

We start by reviewing Kelly's proof~\cite{Kelly} that every rank-$4$ complex-representable matroid has a two-point line. His proof uses a result of Hirzebruch~\cite{Hirzebruch} that every rank-$3$ complex-representable matroid has a line with at most three points. Let $e$ be an element in a simple rank-$4$ complex-representable matroid $M$.  We assume by way of contradiction that $M$ has no $2$-point line. By Hirzebruch's result, 
$M\con e$ contains a line $L$ with exactly  three points, say $p_1$, $p_2$, and $p_3$.
Let $N$ be the restriction of $M$ to the set $L\cup\{e\}$. Then $N$ is a simple rank-$3$ matroid, with no $2$-point line, and $N$ 
comprises three copunctual lines $p_1\cup\{e\}$, $p_2\cup\{e\}$, and $p_3\cup\{e\}$. Kelly's proof is completed by the following result, which is implicit in~\cite{Kelly}. 
\begin{lemma}\label{kelly} For a field $\bF$,
let $N$ be an $\bF$-representable rank-$3$ matroid comprising three co-punctual lines $L_1$, $L_2$, and $L_3$.
If $N$ has no $2$-point line, then $|L_1|=|L_2|=|L_3|$ and $|L_1|-1$ is divisible by the characteristic of $\bF$, and, hence, $\bF$ has positive characteristic.
\end{lemma}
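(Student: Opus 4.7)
The plan is to translate the geometric hypothesis into an additive condition and then apply elementary sumset inequalities. Fix an $\bF$-representation of $N$ in $\PG(2,\bF)$ and choose projective coordinates so that $p$ is a point at infinity, say $p=[0:1:0]$. Then $L_1, L_2, L_3$ become three distinct affine vertical lines $x = x_i$, and the points of $A_i := L_i \setminus \{p\}$ can be written as $(x_i, s)$ with $s$ in some set $S_i \subseteq \bF$ of size $|L_i|-1 \geq 2$. A routine determinant computation shows that $(x_1,s), (x_2,t), (x_3,u)$ are collinear in $N$ if and only if
\[
(x_3-x_2)\,s + (x_1-x_3)\,t + (x_2-x_1)\,u = 0.
\]
Rescaling via $T_1 := (x_3-x_2)S_1$, $T_2 := (x_1-x_3)S_2$, $T_3 := (x_2-x_1)S_3$, collinearity becomes the clean relation $s' + t' + u' = 0$ with $(s',t',u') \in T_1 \times T_2 \times T_3$, and $|T_i| = |L_i|-1$.

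Next, the hypothesis that $N$ has no two-point line implies that for every $i \neq j$ and every $s \in T_i$, $t \in T_j$, the line through the two corresponding points meets $A_k$ (it cannot pass through $p$, since the only line through $p$ and a point of $A_i$ is $L_i$ itself). In additive terms this gives
\[
T_1+T_2 \subseteq -T_3,\qquad T_1+T_3 \subseteq -T_2,\qquad T_2+T_3 \subseteq -T_1.
\]
Combined with the trivial bound $|X+Y| \geq \max(|X|,|Y|)$ for nonempty $X, Y \subseteq \bF$, these three inclusions force $|T_1| = |T_2| = |T_3| =: n$, and each inclusion becomes an equality.

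Finally, choose $t_i \in T_i$ with $t_1 + t_2 + t_3 = 0$ (possible since $T_1 + T_2 = -T_3$), and let $T_i' := T_i - t_i$. Then $0 \in T_i'$ and the relations $T_i' + T_j' = -T_k'$ persist. From $0 \in T_2'$ we obtain $T_1' \subseteq T_1' + T_2' = -T_3'$; since both sides have size $n$, we get $T_1' = -T_3'$, and symmetrically $T_1' = T_2' = -T_3'$. Hence $T_1' + T_1' = T_1'$, and since $T_1'$ is finite and contains $0$, the standard cancellation argument (the translation $x \mapsto x+a$ is an injection, hence a bijection, from $T_1'$ to itself) shows that $T_1'$ is a finite subgroup of $(\bF, +)$ of order $n$. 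Because $n \geq 2$, this forces $\bF$ to have positive characteristic $p$, and $n$ is then a power of $p$; in particular $|L_1| - 1 = n$ is divisible by the characteristic, as required. The substantive step is the initial coordinate choice that converts the geometric hypothesis into a clean three-variable sumset condition; the cardinality and subgroup arguments that follow are essentially bookkeeping.
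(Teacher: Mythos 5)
Your proof is correct, and it takes a genuinely different route from the paper's. The paper first proves $|L_1|=|L_2|=|L_3|$ by an injection argument (fixing a point of $L_2\setminus\{e\}$ and varying a point of $L_1\setminus\{e\}$, the third point of each resulting line embeds $L_1\setminus\{e\}$ into $L_3\setminus\{e\}$), and then gets the divisibility by fixing $a,b\in L_2\setminus\{e\}$, forming the $2$-regular bipartite graph on $(L_1\setminus\{e\})\cup(L_3\setminus\{e\})$ whose edges record the triangles through $a$ or $b$, and extracting the characteristic from a representation of each resulting even cycle together with $\{a,b\}$ (a Reid geometry, with the computation deferred to Kung). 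You instead send the apex to infinity and convert the entire hypothesis into the sumset relations $T_i+T_j\subseteq -T_k$; equality of cardinalities then falls out of the trivial bound $|X+Y|\ge\max(|X|,|Y|)$, and after translating by a collinear triple the three sets coincide (up to sign) with a single finite subgroup of $(\bF,+)$. This is self-contained — no appeal to Reid-geometry representations — and it yields slightly more than the stated lemma: $|L_1|-1$ is a \emph{power} of the characteristic, and the fibres $S_i$ are cosets of one additive subgroup. One small point to patch: placing the apex at infinity with every other point of $N$ affine requires a line of $\PG(2,\bF)$ through the apex meeting $E(N)$ only in that point; since only the three projective lines spanning the $L_i$ meet $E(N)$ elsewhere, such a line exists whenever $|\bF|\ge 3$, and the remaining case $|\bF|=2$ is immediate (each $|L_i|=3$ and the characteristic is $2$).
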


\begin{proof}[Proof sketch.]
Let $e$ be the common point of $L_1$, $L_2$, and $L_3$.
Any two points $e_1\in L_1\setminus\{e\}$ and $e_2\in L_2\setminus \{e\}$ are collinear with a third point $e_3\in L_3\setminus\{e\}$. By fixing a particular choice of element $e_2$ and varying $e_1$ we see that $|L_3|\ge |L_1|$. Then, by symmetry, we see that $L_1$, $L_2$, and $L_3$ all have the same size. 
Fix two distinct elements $a,b\in L_2\setminus\{e\}$. 
Consider the $2$-regular bipartite graph $H$ with bipartition $(L_1,L_3)$ such that $x\in L_1$ is adjacent to $y\in L_3$ if 
either $\{x,a,y\}$ or $\{x,b,y\}$ is a triangle. Thus $H$ is a disjoint union of even cycles. Consider one of these cycles, say $C$.
A straightforward argument considering the representation of $M|(\{a,b\}\cup V(C))$ reveals that the characteristic of $\bF$ is $\frac{1}{2} |V(C)|$.
Since this holds for each component of $H$, we see that the characteristic of $\bF$ divides $|L_1\setminus\{e\}|$, as required.
\end{proof} 

The matroid $N|(\{a,b\}\cup L_1\cup L_3)$, in the proof sketch, is an example of a ``Reid Geometry''. The details regarding the representation of Reid Geometries, which were omitted in our proof sketch, are given explicitly by Kung~\cite[Proposition 2.2]{Kung}. 

In the case that $\bF$ is a finite field of size $p$, where $p$ is prime, the lines $L_1$, $L_2$, and $L_3$, in Lemma~\ref{kelly}, must have length exactly $p+1$, since longer lines are not
GF$(p)$-representable.
\begin{lemma}\label{reid1} For a prime $p$,
let $N$ be a simple $\GF(p)$-representable rank-$3$ matroid comprising three copunctual lines $L_1$, $L_2$, and $L_3$.
If $N$ has no $2$-point line, then the lines $L_1$, $L_2$, and $L_3$ each have exactly $p+1$ points.
\end{lemma}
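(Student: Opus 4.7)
The plan is to deduce this lemma as a quick corollary of Lemma~\ref{kelly}, using the specific arithmetic of $\GF(p)$. Since $N$ is $\GF(p)$-representable with no $2$-point line, Lemma~\ref{kelly} applied with $\bF = \GF(p)$ tells us that $|L_1| = |L_2| = |L_3|$, and that this common length minus $1$ is divisible by the characteristic of $\GF(p)$, namely $p$. Because the lines are certainly nontrivial (they pass through the common point and at least one other point each, and in fact $|L_i| \geq 3$ since there is no $2$-point line), we have $|L_i| - 1 \geq p$, so $|L_i| \geq p + 1$.

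For the matching upper bound, I would invoke the standard fact that every line in a simple $\GF(p)$-representable matroid has at most $p+1$ points. This is because a rank-$2$ simple $\GF(p)$-representable matroid is a restriction of $\PG(1,p)$, which has exactly $p+1$ points (one for each nonzero vector in $\GF(p)^2$ up to scaling). Combining $|L_i| \geq p+1$ with $|L_i| \leq p+1$ yields $|L_i| = p+1$.

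There is really no obstacle: the lemma is essentially a specialization of the preceding Lemma~\ref{kelly} to the finite prime field case, where the divisibility condition forces the minimal possible nontrivial line length and the $\GF(p)$-representability caps it. The only point worth being careful about is confirming that $|L_i| \geq 3$ so that $|L_i| - 1$ is a positive (and hence $\geq p$) multiple of $p$; this is immediate from the hypothesis that $N$ has no $2$-point line.
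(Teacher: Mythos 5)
Your proof is correct and follows exactly the paper's reasoning: the paper derives Lemma~\ref{reid1} from Lemma~\ref{kelly} in the sentence preceding its statement, using the same two facts you cite (the divisibility of $|L_i|-1$ by $p$ together with $|L_i|\geq 3$ for the lower bound, and the non-representability of longer lines over $\GF(p)$ for the upper bound). No differences to report.
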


While Kelly used Reid geometries to find two-point lines, we are interested in independent flats of arbitrary rank. In the next two lemmas, we derive a high-dimensional generalization of Lemma~\ref{reid1}  suitable for our purposes.  Note that throughout this paper, when we refer to a {\it{point}} in a matroid, we mean a rank-$1$ flat (as opposed to an element in the ground set).

\begin{lemma}\label{reid2} For an integer $m\ge 3$ and a prime $p$, 
let $e$ be an element of a  simple $\GF(p)$-representable matroid $M$ such that $\si(M\con e)$ is a connected $m$-element matroid.  If $e$ is not a coloop in $M$ and $M$ has no hyperplane disjoint from $e$ with fewer than $m$ points, then each of the lines of $M$ containing $e$ has length $p+1$. 
\end{lemma}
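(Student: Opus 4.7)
The plan is to argue by contradiction: assume some line $L$ through $e$ in $M$ has fewer than $p+1$ points. Fixing a $\GF(p)$-representation of $M$ inside $\PG(r-1,p)$ with $r=r(M)$, I would first observe that for any hyperplane $H$ of $M$ disjoint from $e$, the map $v\mapsto\cl_M(\{e,v\})$ injects $H\cap M$ into the set of lines of $M$ through $e$, since the projective line through $e$ and any $v\in H$ meets the projective hyperplane $H$ in exactly one projective point (because $e\notin H$). This gives $|H|\le m$; combined with the hypothesis, this forces $|H|=m$, so every hyperplane of $M$ disjoint from $e$ meets every line through $e$ at an element of $M$.

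Given this, the strategy is as follows. Since $|L|\le p$, there is a projective point $x$ on $L$ distinct from $e$ with $x\notin M$, and it suffices to construct a hyperplane of $M$ disjoint from $e$ whose projective closure contains $x$; such a hyperplane would meet $L$ only at $x$ in $\PG$ and hence miss $L\cap M$ entirely, contradicting the preceding observation. To attempt this, I use that $\si(M\con e)$ is connected on $m\ge 3$ elements and hence has no coloop; in particular $L$ is not a coloop, so I can extend $\{L\}$ to an independent set $\{L,L_2,\ldots,L_{r-1}\}$ in $\si(M\con e)$ and choose lifts $b_i\in L_i\cap M\setminus\{e\}$. Then $U:=\operatorname{span}(x,b_2,\ldots,b_{r-1})\subseteq\PG(r-1,p)$ is an $(r-1)$-dimensional projective subspace containing $x$ but not $e$, and it contains the $r-2$ independent $M$-elements $b_2,\ldots,b_{r-1}$. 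If $U$ turns out to be a hyperplane of $M$, the proof is complete.

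The main obstacle is showing $U\cap M$ has rank $r-1$ in $M$, equivalently exhibiting one additional $M$-element of $U$ independent from $\{b_2,\ldots,b_{r-1}\}$. This amounts to finding a line $L^*$ through $e$ distinct from $L$ whose intersection with $U$ (a single projective direction of $L^*$, namely the one congruent to $x$ modulo $\operatorname{span}(b_2,\ldots,b_{r-1})$) happens to be an element of $M$. If the initial basis choice does not produce such an $L^*$, I would pivot within $\si(M\con e)\setminus L$: here the connectedness of $\si(M\con e)$ is crucial, since it should prevent the $M$-elements on the lines of $\si(M\con e)\setminus L$ from systematically avoiding a given coset of $\operatorname{span}(b_2,\ldots,b_{r-1})$ in $\GF(p)^r$, letting the basis be adjusted until the required line $L^*$ appears. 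I expect this pivoting argument, and the invocation of the isomorphism $M|H\cong\si(M\con e)$ that follows from $|H|=m$, to constitute the main technical step of the proof.
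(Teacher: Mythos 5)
Your opening observation is correct and matches the paper's: a hyperplane of $M$ disjoint from $e$ meets each line through $e$ in at most one element of $M$, so the hypothesis forces every such hyperplane to have exactly $m$ points, one on each line through $e$. The rest of the argument, however, has a genuine gap: the entire proof rests on the ``pivoting argument'' that is supposed to produce an element of $M$ in $U=\cl_{\PG}(\{x,b_2,\ldots,b_{r-1}\})$ outside $\cl_M(\{b_2,\ldots,b_{r-1}\})$, and this step is only asserted (``I expect this\ldots to constitute the main technical step''), not carried out. More importantly, it cannot be carried out by connectivity considerations alone, because nothing in your argument uses that $p$ is prime --- you only use that a projective line over the field has one more point than $|L|$ can account for. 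The statement with $p$ replaced by a prime power is false: take $M\cong \PG(2,2)$ represented over $\GF(4)$ and any element $e$. Then $\si(M/e)$ is a connected $3$-element matroid, $e$ is not a coloop, and every hyperplane disjoint from $e$ is a $3$-point line, so all your hypotheses and your first paragraph hold verbatim; yet the lines through $e$ have $3$ points, not $5$. In that example one can check directly that for each non-$M$ point $x$ of $\bar L$ and each admissible choice of $b_2$, the projective line $\cl_{\PG}(\{x,b_2\})$ contains no further element of $M$ --- precisely because the elements of $M$ on the third line through $e$ are ``used up'' pairing with the elements of $L\setminus\{e\}$. So the $M$-elements on the other lines really can systematically avoid the coset you need, and connectivity of $\si(M/e)$ does not prevent this.

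The missing ingredient is arithmetic, and it is exactly what the paper supplies via a different route: the paper does not try to build a hyperplane avoiding $L$, but instead uses connectivity to extract a circuit in a hyperplane $M|H\cong\si(M/e)$, contracts most of it, and reduces to a rank-$3$ configuration of three copunctual lines with no two-point line (a Reid geometry). Lemma~\ref{kelly} then shows the characteristic of the field divides $|L_1|-1$, and since $\GF(p)$-representable lines have at most $p+1$ points and $p$ is prime, this forces length exactly $p+1$ (Lemma~\ref{reid1}). Any repair of your approach would have to import this characteristic computation (or an equivalent) into the pivoting step; as written, the proposal proves nothing beyond the first paragraph.
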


\begin{proof} 
Note that each hyperplane disjoint from $e$ contains exactly one element from each line containing $e$. Since $e$ is not a coloop, one of those lines, say $L_1$, has length at least $3$. 
Let $a_1$ and $a_2$ be distinct elements in $L_1\setminus \{e\}$ and let $b\in E(M)\setminus L_1$ be chosen arbitrarily.
There is a hyperplane $H$ that contains $a_1$ and $b$ and is disjoint from $e$.
Note that $M|H$ is isomorphic to ${\mathrm{si}}(M/e)$. Since ${\mathrm{si}}(M/e)$ is connected, there is a circuit $C$ of $M|H$ that contains both $a_1$ and $b$.
Let $c\in C\setminus \{a_1,b\}$ and let $L_2$ and $L_3$ be the lines spanned by $\{e,b\}$ and $\{e,c\}$ respectively. 
Let $X=C\setminus \{a_1,b,c\}$ and let $N$ denote the restriction of $M\con X$ to $L_1\cup L_2\cup L_3$. Note that 
$N$ is a simple rank-$3$ matroid and $L_1$, $L_2$, and $L_3$ are lines in $N$ that intersect at $e$. 

Consider any line $L$ of $N$ that does not contain $e$ and let $a$ and $b$ be distinct points of $L$.
Note that $X \cup \{a,b,e\}$ is independent in $M$. Extend $X \cup \{a,b,e\}$ to a basis
$B$ of $M$ and consider the hyperplane $F$ spanned by $B\setminus\{e\}$. 
This hyperplane intersects each of the lines through $e$ in a point, so $F$ contains one element from each of $L_1$, $L_2$, and $L_3$. Therefore $L$ is a $3$-point line in $N$.
Thus every line in $N$ disjoint from $e$  has at least three points. 

The lines spanned by $\{a_1,b\}$ and $\{a_2,b\}$, in $N$, both
intersect $L_3$ in a point distinct from $e$. Therefore $|L_3|\ge 3$ and, by symmetry, we also have $|L_2|\ge 3$. Thus $N$ has no two-point line. Therefore, by Lemma~\ref{reid1}, the lines $L_1$, $L_2$, and $L_3$ each have exactly $p+1$ points. Then, by our choice of $b$, all lines through $e$ in $M$ have length $p+1$.
\end{proof}

We use Lemma \ref{reid2} iteratively to build affine geometries. 

\begin{lemma}\label{kelly2}
For integers $k\ge 2$  and $n \geq t\ge 2$ and a prime $p$, if $M$ is a matroid of rank at least $t+k-1$ such that every rank-$t$ flat in $M$ has at least $n$ points, then either
\begin{itemize}
\item[(i)] $M$ has an $\AG(k-1,p)$-restriction,
\item[(ii)] there is a rank-$(k-1)$ flat $F$ in $M$ such that every rank-$t$ flat in $M\con F$ has at least $n+1$ points, or
\item[(iii)] $M$ has a rank-$t$ flat $F$ such that $M|F$ is not connected.
\end{itemize}
\end{lemma}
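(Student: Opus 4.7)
I would prove the lemma by induction on $k$, with Lemma~\ref{reid2} as the central tool. Throughout, I assume (iii) fails and aim to show (i) or (ii). The preliminary ingredient is a \emph{hyperplane--line bijection}: for any matroid $M'$, rank-$(t+1)$ flat $G$ of $M'$, element $e\in G$, and hyperplane $H$ of $M'|G$ disjoint from $e$ (which always exists, taking $H=G\setminus\{e\}$ if $e$ is a coloop of $M'|G$), the map $L\mapsto L\cap H$ is a bijection from lines of $M'|G$ through $e$ to points of $H$, by submodularity. A consequence is that contracting any single element cannot decrease the minimum size of a rank-$t$ flat, and by iteration neither can contracting a flat. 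So if $M$ already has every rank-$t$ flat of size at least $n+1$, any rank-$(k-1)$ flat $F$ yields (ii). We may therefore assume some rank-$t$ flat $H_0$ of $M$ has exactly $n$ points; since $M|H_0$ is connected (as (iii) fails) and a connected rank-$t$ matroid has at least $t+1$ elements, $n\ge t+1\ge 3$.

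For the base case $k=2$, assume also that (ii) fails. For any element $e$, there is then a rank-$(t+1)$ flat $G\ni e$ with at most $n$ lines through $e$; the bijection forces the disjoint hyperplane $H$ of $M|G$ to be a rank-$t$ flat of $M$ with $|H|=n$. Since $M|H$ is connected on $n\ge 3$ elements and $e$ is not a coloop of $M|G$, Lemma~\ref{reid2} applies to $M|G$ and $e$, forcing every line of $M|G$ through $e$ to have exactly $p+1$ points. Any $p$-element sub-line is an $\AG(1,p)$-restriction of $M$, giving (i).

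For the inductive step, apply the lemma for $k-1$ to $M$, obtaining an $\AG(k-2,p)$-restriction, a rank-$(k-2)$ flat $F_0$ with every rank-$t$ flat of $M/F_0$ of size at least $n+1$, or a disconnected rank-$t$ flat. The third option contradicts our standing assumption. In the second, extending $F_0$ by any $e\notin F_0$ to a rank-$(k-1)$ flat $F$, and applying the bijection inside $M/F_0$, gives that every rank-$t$ flat of $M/F$ has at least $n+1$ points, establishing (ii).

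The main obstacle is the remaining case, where the inductive hypothesis produces only an $\AG(k-2,p)$-restriction $R$. Here I would take $F=\cl_M(R)$, a rank-$(k-1)$ flat, and assume (ii) fails for this $F$; a generalized version of the bijection in a rank-$(t+k-1)$ witness flat $G\supseteq F$ (using a rank-$t$ modular complement of $F$ in $G$) yields a rank-$t$ flat $H\subseteq G$ disjoint from $F$ with $|H|=n$ and $M|H$ connected. Applying Lemma~\ref{reid2} to $(M|G)/F'$ with distinguished element $e\in F\setminus F'$, where $F'\subset F$ is a rank-$(k-2)$ subflat chosen to respect the $\AG(k-2,p)$ structure of $R$, should produce $(p+1)$-point lines whose preimages in $M$, combined with $R$, assemble into an $\AG(k-1,p)$-restriction, giving (i). Verifying the hypotheses of Lemma~\ref{reid2} in this contracted setting---especially the existence of the modular complement and the non-coloop status of $e$---is where the bulk of the technical work concentrates.
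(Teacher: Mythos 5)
Your base case and your handling of outcome (ii) are essentially sound: the observation that contracting an element cannot decrease the minimum number of points in a rank-$t$ flat is correct, and your unjustified assertion that $e$ is not a coloop of $M|G$ is easily repaired (if $e$ were a coloop of $M|G$, it would be a coloop of the restriction of $M$ to any rank-$t$ flat through $e$ inside $G$, giving a disconnected rank-$t$ flat and hence outcome (iii)). The genuine gap is the main case of your inductive step, where the induction hypothesis hands you only an $\AG(k-2,p)$-restriction $R$. The plan to set $F=\cl_M(R)$, contract a subflat, apply Lemma~\ref{reid2}, and ``assemble'' the resulting $(p+1)$-point lines into an $\AG(k-1,p)$ is not yet an argument, and two of its steps fail as stated. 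First, the ``modular complement'' $H$ of $F$ in a witness flat $G$ need not be a rank-$t$ flat of $M$ with $n$ points: a rank-$k$ flat $L$ with $F\subset L\subseteq G$ determines a unique projective point of the complement, but that projective point need not contain any element of $M$, so the correspondence between points of $(M\con F)|(G\setminus F)$ and points of $M|H$ is not size-preserving in the direction you need. (The fix is to choose a rank-$t$ flat of $M$ skew to $F$ first; it has at least $n$ points by hypothesis and injects into $\si(M\con F)$, forcing equality.) Second, and more seriously, producing $(p+1)$-point lines does not by itself yield an $\AG(k-1,p)$-restriction: you must determine exactly how many points lie in $\cl(F\cup\{f\})\setminus F$ and identify that set as an affine geometry, which requires a counting argument beyond Lemma~\ref{reid2}.

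The paper avoids your induction on $k$ entirely, and thereby avoids this hard case. It fixes an \emph{arbitrary} rank-$(k-1)$ flat $C$ (no affine structure on $C$ is needed), reduces to the case where $\si(M\con C)$ is connected with exactly $n$ points, and applies Lemma~\ref{reid2} once per pair $(e,f)$ with $e\in C$ and $f\notin C$ to show that \emph{every} line joining $C$ to $E(M)\setminus C$ has exactly $p+1$ points. Then, for a basis $e_1,\dots,e_{k-1}$ of $C$ and any $f\notin C$, the sets $S_i=\cl(\{f,e_1,\dots,e_i\})\setminus C$ satisfy $|S_{i+1}|=p|S_i|$, so $|S_{k-1}|=p^{k-1}$ and $M|S_{k-1}\cong\AG(k-1,p)$: the affine geometry is found as the complement of $C$ inside a rank-$k$ flat, not by extending a smaller affine geometry. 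If you want to rescue your induction, the cleanest repair is to notice that the $\AG(k-2,p)$-restriction your hypothesis returns is a red herring --- any rank-$(k-1)$ flat can serve as the apex --- at which point you have reconstructed the paper's direct argument.
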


\begin{proof}
It suffices to prove the result in the case that $M$ has rank equal to $t+k-1$. Let $C$ be a rank-$(k-1)$ flat in $M$.
Since $M\con C$ has rank $t$, we may assume that
$M\con C$ has exactly $n$ points since otherwise $(ii)$ holds. Moreover, we may assume that $M\con C$ is connected, since otherwise $(iii)$ holds.

\begin{claim} For each $e\in C$ and $f\in E(M)\setminus C$, the line spanned by $\{e,f\}$ has length $p+1$.
\end{claim}

\begin{proof}[Proof of claim.]
There is a rank-$t$ flat $F$ of $M$ such that $f\in F$ and $r(F\cup C)= r(F)+r(C)$. Note that $M|F\cong \si(M\con C)$, so $F$ has $n$ points and $M | F$ is connected.
Let $N$ denote the restriction of $M$ to the flat spanned by $F\cup\{e\}$. Note that $e$ is not a coloop of $N$ since otherwise by considering a hyperplane of $N$ containing $e$ we get a rank-$t$ flat with fewer than $n$ points. Then, by Lemma~\ref{reid2}, every line of $N$ through $e$ has length $p+1$. In particular, the line spanned by $\{e,f\}$ has length $p+1$.
\end{proof}

Let $\{e_1,\ldots,e_{k-1}\} $ be a basis of $C$, let $f\in E(M)\setminus C$, and for each $i\in\{1,\ldots,k-1\}$ let $S_i:=\cl(\{f,e_1,e_2,\ldots,e_i\})\setminus C$.
By the claim, $|S_1|=p$ and, for each $i\in\{1,\ldots,k-2\}$, each point in $S_i$ lifts to $p$ points in $S_{i+1}$. Therefore $|S_{i+1}|=p|S_i|$ and hence $|S_{k-1}|=p^{k-1}$.
It follows that $M|S_{k-1}\cong \AG(k-1,p)$.
\end{proof}

\section{Proof of the Main Theorem}

Lemma~\ref{kelly2} will do most of the work in finding the unavoidable flats in Theorem~\ref{unavoidable}, but two problems still remain to be solved: how do we find a flat with the desired size in the minor; and what do we do with an affine geometry restriction? We resolve both of these issues using Ramsey theory. The following is a consequence of Graham, Leeb and Rothschild's Ramsey theorem for vector spaces over a finite field~\cite{Graham}.

\begin{grt}
For each prime-power $q$ and positive integer $t$ there is an integer $\R_q(t)$ such that , if we two-colour the points in a rank-$\R_q(t)$ projective geometry over
$\GF(q)$, then there will be a monochromatic rank-$t$ flat.
\end{grt}

Our other Ramsey-theoretic tool is a corollary of the Hales-Jewett Theorem~\cite{Hales}.

\begin{ghjt}
For each prime-power $q$ and positive integers $t$ and $k$ there is an integer $\HJ_q(t,k)$ such that , if we $k$-colour the points in a rank-$\HJ_q(t,k)$ affine geometry over
$\GF(q)$, then there will be a monochromatic rank-$t$ flat.
\end{ghjt}

A consequence of this is that coextensions of huge affine geometries contain large affine geometries.

\begin{lemma}\label{affine}
For a prime-power $q$ and integers $k,m\ge 2$ and $n=\HJ_q(k,q^m)$, 
if $J$ is a $m$-element independent set in  $\GF(q)$-representable matroid $M$ and $M/J$ has an $\AG(n-1,q)$-restriction, then $M$ has an $\AG(k-1,q)$-restriction.
\end{lemma}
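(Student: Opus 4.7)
The plan is to use the $\GF(q)$-representation of $M$ to attach, to each element of the affine geometry in $M/J$, a ``$J$-coordinate'' taking values in $\GF(q)^m$. This gives a $q^m$-colouring of $\AG(n-1,q)$, to which we apply the Geometric Hales-Jewett Theorem, and a short calculation will show that the resulting monochromatic rank-$k$ affine flat is an $\AG(k-1,q)$-restriction of $M$ itself.

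First, fix a $\GF(q)$-representation of $M$ in which the elements of $J$ are the first $m$ standard basis vectors $e_1,\ldots,e_m$, and let $A\subseteq E(M)\setminus\cl_M(J)$ be a set whose image in $M/J$ is the ground set of the $\AG(n-1,q)$-restriction. Writing each representing vector as $v_a = (c_a, u_a)$ with $c_a\in \GF(q)^m$ and $u_a$ in a fixed complement of the $J$-span, the vectors $u_a$ represent $(M/J)|A \cong \AG(n-1,q)$. By changing basis in the complement and rescaling each $v_a$ by a nonzero scalar, I arrange that $u_a = (x_a,1)$ with $\{x_a : a \in A\} = \GF(q)^{n-1}$, i.e.\ the standard affine embedding of $\AG(n-1,q)$.

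Now colour each $a \in A$ by $c_a \in \GF(q)^m$. Since $n = \HJ_q(k, q^m)$, the Geometric Hales-Jewett Theorem produces a rank-$k$ affine flat $A' \subseteq A$ on which the colouring is constant, say $c_a = c$ for all $a \in A'$; thus $v_a = (c, x_a, 1)$ for each $a\in A'$ and $\{x_a : a\in A'\}$ is a $(k-1)$-dimensional affine subspace of $\GF(q)^{n-1}$. Any linear relation $\sum_{a\in A'}\lambda_a v_a = 0$ must satisfy $\sum \lambda_a = 0$ (read off the final coordinate of each $u_a$), which makes the $J$-contribution $(\sum\lambda_a)\,c$ automatically zero and reduces the relation to $\sum\lambda_a x_a = 0$. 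Hence the dependencies of $M|A'$ are exactly the affine dependencies of $\{x_a : a\in A'\}$, so $M|A'\cong \AG(k-1,q)$.

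The delicate step is setting up the uniform normalization $u_a=(x_a,1)$: it is what makes $c_a\in \GF(q)^m$ a well-defined invariant of $a$ for colouring and, crucially, ensures that the constant final coordinate of each $u_a$ forces any linear relation to also be affine, so that the shared $J$-tail $c$ contributes nothing to the dependencies of $M|A'$. After that the argument is essentially bookkeeping.
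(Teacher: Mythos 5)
Your proposal is correct and follows essentially the same route as the paper: put the representation in block form over $J$, colour each element of the affine geometry in $M/J$ by its $J$-coordinate in $\GF(q)^m$, apply the Geometric Hales--Jewett Theorem, and check that the constant $J$-part contributes nothing to the dependencies on the monochromatic flat. The only cosmetic difference is that the paper kills the $J$-part by a row operation (adding multiples of the all-ones row) rather than by your direct argument that every linear relation is forced to be affine; these are the same computation.
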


\begin{proof} We may assume that $M/J$ is isomorphic to $\AG(n-1,q)$.
The matroid $M$ has a $\GF(p)$-representation of the form
$$
A=\bordermatrix{ & J & \cr
 & I & B \cr
& 0& D},
$$ 
where $I$ denotes the $J\times J$ identity matrix and $D$ is a representation of $\AG(n-1,q)$.
We may further assume that the first row of $D$ is the all-ones vector.
Note that $B$ has at most $q^m$ distinct columns, and we assign each element of $M\con J$ a colour according to the associated column of $B$.
By the Geometric Hales-Jewett Theorem, there is a rank-$k$ flat $F$ of $(M/J)$ which is monochromatic under this colouring. By adding linear combinations of the first row of $D$ to the rows of $B$, we may assume that the restriction of $B$ to $F$ is the all-zero matrix. Therefore 
$M|F=(M\con J)|F\cong \AG(k-1,q)$, as desired.
\end{proof}

Lemma~\ref{affine} allows us to refine Lemma~\ref{kelly2}.

\begin{lemma}\label{restriction}
For integers $k \geq 2$, $n \geq t \geq 2$ and a prime $p$, there is an integer $N_{p}'(k,t,n)$ such that, if $M$ is a simple $\GF(p)$-representable matroid with $r(M) \geq N_{p}'(k,t,n)$, then $M$ has either an $\AG(k-1,p)$-restriction or a rank-$t$ flat which is either disconnected or has at most $n$ points.
\end{lemma}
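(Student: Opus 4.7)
The plan is to iterate Lemma~\ref{kelly2}, using Lemma~\ref{affine} at each step to lift any affine-geometry restriction found in a contraction-minor back to a restriction of $M$. I would set this up as an induction on $L - n$, where $L := (p^{t}-1)/(p-1)$ is the maximum number of points in a rank-$t$ flat of a simple $\GF(p)$-representable matroid.

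The base case $n \geq L$ is immediate: every rank-$t$ flat of $M$ automatically has at most $L \le n$ points, so we may take $N_{p}'(k,t,n) := t$. For the inductive step $n < L$, with $N_{p}'(k',t,n+1)$ already defined for every $k' \ge 2$, we may assume every rank-$t$ flat of $M$ has more than $n$ points, since otherwise the conclusion is immediate. Applying Lemma~\ref{kelly2} to $M$ with parameters $(k,t,n+1)$, outcome (i) produces the desired $\AG(k-1,p)$-restriction and outcome (iii) the desired disconnected rank-$t$ flat of $M$. In the remaining outcome (ii) we get a rank-$(k-1)$ flat $F$ of $M$ such that every rank-$t$ flat of $M \con F$ has at least $n+2$ points. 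Setting $k' := \HJ_{p}(k, p^{k-1})$ and $M' := \si(M \con F)$, I apply the inductive hypothesis to $M'$ with parameters $(k', t, n+1)$, which is legitimate provided $r(M') = r(M) - (k-1) \ge N_{p}'(k', t, n+1)$. This motivates the recursive definition
\[
 N_{p}'(k, t, n) \;:=\; N_{p}'\bigl(\HJ_{p}(k, p^{k-1}),\; t,\; n+1\bigr) + (k - 1),
\]
which terminates after at most $L - n$ levels of recursion.

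Of the three possible conclusions the inductive hypothesis returns for $M'$, the ``rank-$t$ flat with at most $n+1$ points'' alternative is ruled out by the construction of $F$, and an $\AG(k'-1,p)$-restriction of $M'$ lifts to an $\AG(k-1,p)$-restriction of $M$ via Lemma~\ref{affine} applied with $J$ a basis of $F$ (so $|J| = k-1$). The main obstacle is the remaining subcase, in which the induction returns a disconnected rank-$t$ flat $G$ of the minor $M'$ rather than of $M$ itself. I would address this by setting $H := \cl_{M}(B)$ for $B$ a basis of $G$ in $M/F$: a short submodularity computation shows that $H$ is a rank-$t$ flat of $M$ disjoint from $F$, and a decomposition $G = G_{1} \sqcup G_{2}$ witnessing the disconnection of $M'|G$ should pull back, via bases of its two components, to a decomposition showing that $M|H$ is disconnected. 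Modulo this translation step, the rest of the proof is a routine (if bookkeeping-heavy) verification that the recursive bound is finite.
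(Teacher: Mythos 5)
Your proposal is correct and follows essentially the same route as the paper: induction on $n$ up to $(p^{t}-1)/(p-1)$, with Lemma~\ref{kelly2} providing the rank-$(k-1)$ flat $F$ and Lemma~\ref{affine} lifting the affine geometry back from $M\con F$, and with a recursive bound of the same shape. The ``translation step'' you isolate as the main obstacle is exactly the content of the paper's unproved assertion that every rank-$t$ flat of $M\con F$ is connected (its contrapositive), and your sketch of it --- taking $H=\cl_{M}(B)$ for $B$ a basis of $G$ and observing that $M|H$ is a spanning restriction of $(M\con F)|G$, hence inherits the separation --- is sound.
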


\begin{proof}
Let $t \geq 2$, and observe that, for $n \geq \frac{p^{t}-1}{p-1}$, the integer $N_{p}'(k,t,n)$ exists for all $k \geq 2$. Now let $k \geq 2$, $t \leq n < \frac{p^{t}-1}{p-1}$, and assume that $N_{p}'(i,t,n+1)$ exists for all $i \geq 2$. Let $M$ be a simple $\GF(p)$-representable matroid with $$r(M) \geq t+k-1 + N_{p}'(\HJ_{p}(k,p^{k-1}),t,n+1).$$ We may assume that every rank-$t$ flat in $M$ is connected and has at least $n+1$ points. By Lemma~\ref{kelly2}, there is a rank-$(k-1)$ flat $F$ in $M$ such that every rank-$t$ flat in $M \con F$ has at least $n+2$ points. Moreover, every rank-$t$ flat in $M \con F$ is connected. By induction, $M \con F$ has an $\AG(\HJ_{p}(k,p^{k-1})-1,p)$-restriction. The result now follows by Lemma~\ref{affine}.
\end{proof}

We can now prove Theorem \ref{unavoidable}; to facilitate induction, we prove a slightly stronger result. 
\begin{theorem}
For every prime $p$ and integers $k,t \geq 1$ there is an integer $N_{p}(k,t)$ such that, if $M$ is a simple $\GF(p)$-representable matroid with $r(M) \geq N_{p}(k,t)$, then $M$ contains either a rank-$t$ independent flat or a flat $F$ such that $M|F \cong \AG(k-1,p)$ or $M|F \cong \PG(k-1,p)$.
\end{theorem}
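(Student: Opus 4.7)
The plan is to induct on $t$. For the base case $t = 1$, any element of $M$ is a rank-$1$ independent flat, so set $N_p(k, 1) = 1$. For the inductive step with $t \geq 2$, let $K := \max\{k, \HJ_p(k, p^{t-1})\}$ and choose $N_p(k, t)$ at least $N_p'(K, t, t)$ and large enough that $r(M) - (t-1) \ge N_p(K, t-1)$. Apply Lemma~\ref{restriction} to $M$ with parameters $(K, t, t)$; this yields either (i) an $\AG(K-1, p)$-restriction of $M$, giving $\AG(k-1, p)$ via a rank-$k$ subspace, (ii) a rank-$t$ flat $F$ with $|F| \le t$, forcing $|F| = t$ and hence $F$ independent, or (iii) a rank-$t$ flat $F$ with $M|F$ disconnected.

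For case (iii), write $M|F = N_1 \oplus \cdots \oplus N_c$ with $c \ge 2$ components. If every $N_i$ is a singleton then $F \cong U_{t,t}$ is an independent rank-$t$ flat. Otherwise, pick $N_j$ with $r_j \ge 2$, and let $B$ be a basis of $N_j$, so $B$ is independent with $2 \le |B| \le t-1$. Then $\si(M/B)$ has rank $r(M) - |B| \ge N_p(K, t-1)$, and the inductive hypothesis applied to $\si(M/B)$ with parameters $(K, t-1)$ yields one of: an $\AG(K-1, p)$-restriction in $M/B$ (which lifts to $\AG(k-1, p)$ in $M$ by Lemma~\ref{affine}, since $|B| \le t-1$ and $K = \HJ_p(k, p^{t-1})$); a $\PG(K-1, p)$-restriction in $M/B$; or an independent rank-$(t-1)$ flat $I'$ of $\si(M/B)$.

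In the $\PG$-subcase, I extract a $\PG(k-1, p)$-restriction of $M$ by analysing how the lifted set sits relative to $B$ in $M$: when the rank is preserved under the contraction, $M$ has $\PG(K-1, p)$ directly and we pass to a rank-$k$ subspace. In the independent flat subcase, when $F$ has a singleton component $\{e\}$, the parallel class of $e$ in $M/B$ is trivial (since $\cl_M(N_j \cup \{e\}) = N_j \cup \{e\}$); provided $I'$ is chosen to consist of elements with trivial lifts in $M/B$ and to avoid $e$, the set $I' \cup \{e\}$ becomes an independent rank-$t$ flat of $M$. The hardest part is case (iii) when $F$ has no singleton component (forcing $t \ge 4$) and when parallel classes in $M/B$ are uniformly nontrivial: here one must either iterate Lemma~\ref{restriction} on $\si(M/B)$ to force more singletons, or replace the missing $\PG$-analogue of Lemma~\ref{affine} by an appeal to the Geometric Ramsey Theorem applied to the $\PG(K-1, p)$ found in $M/B$, coloured by the rank contribution of $B$, so as to realise $\PG(k-1, p)$ intrinsically in $M$.
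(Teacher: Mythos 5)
There are two genuine gaps. First, in your case (i) you treat an $\AG(K-1,p)$-\emph{restriction} of $M$ as if it immediately yielded an $\AG(k-1,p)$ \emph{flat}: a rank-$k$ subspace of the affine geometry is a flat of that restriction, but its closure in $M$ may contain further points of $M$, in which case $M$ restricted to that closure is neither affine nor projective. This is precisely the difficulty the paper's Case~1 is built to handle: it calls Lemma~\ref{restriction} with first parameter $R_p(k-1)+1$ (not $k$ or $\HJ_p(k,p^{t-1})$), takes the closure $F=\cl_M(A)$ of the affine restriction, embeds it in a projective geometry $G$, two-colours the hyperplane $E(G)\setminus A$ according to membership in $E(M)$, and applies the Geometric Ramsey Theorem to find a monochromatic rank-$(k-1)$ flat $H$; the cone $E(M)\cap\cl_G(H\cup\{e\})$ is then a flat of $M$ isomorphic to $\AG(k-1,p)$ or $\PG(k-1,p)$ depending on the colour of $H$. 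Without $R_p(k-1)$ built into your parameters, this step is unavailable to you.

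Second, your case (iii) is incomplete by your own admission: contracting a basis $B$ of a non-singleton component forces you to lift independent flats, affine restrictions, \emph{and} projective restrictions from $\si(M/B)$ back to $M$, and you concede there is no $\PG$-analogue of Lemma~\ref{affine} and that the subcase with no singleton component is unresolved. The paper sidesteps all of this with a choice of parameters: it invokes Lemma~\ref{restriction} with $t=n=m:=2N_p(k,t-1)$, so the disconnected flat $F$ it obtains has rank $m$, large enough that deleting the smallest component $N$ of $M|F$ leaves a flat $F'$ with $r_M(F')\ge N_p(k,t-1)$. Induction is then applied to the \emph{restriction} $M|F'$ (no contraction, so flats of $M|F'$ are flats of $M$), producing a rank-$(t-1)$ independent flat $I$, and $I\cup\{e\}$ for any $e\in E(N)$ is the desired rank-$t$ independent flat because $e$ lies in a different component of $M|F$. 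Your decision to fix the flat-rank parameter at $t$ is what forces the contraction and creates the unresolved lifting problems; letting that parameter grow with $N_p(k,t-1)$ is the missing idea.
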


\begin{proof}
For $p$ prime and $k \geq 1$, we prove the result for $N_{p}(k,1)=1$, and, for $t \geq 2$, we recursively define
$$N_{p}(k,t)=N_{p}'(R_{p}(k-1)+1, \, 2N_{p}(k,t-1),\, 2N_{p}(k,t-1)),$$ 
where, by convention, we take $R_{p}(0)=0$. We proceed by induction on $t$, where the base case holds because any point in a matroid forms a rank-$1$ independent flat. Now let $t \geq 2$, and assume that the assertion holds for $N_{p}(k,t-1)$ for every integer $k \geq 1$.

For $k \geq 1$, suppose $M$ is a simple $\GF(p)$-representable matroid with $r(M) \geq N_{p}(k,t)$.
Let $m=2N_{p}(k,t-1)$. Note that, if $F$ is a rank-$m$ flat with exactly $m$ points, then $M|F$ is not connected. Therefore,
applying Lemma~\ref{restriction} gives one of the following two cases.

{\flushleft \textbf{Case 1.} {\em There is a set $A \subseteq E(M)$ such that $M|A \cong \AG(R_{p}(k-1),p)$.}}

Let $F = \cl_{M}(A)$, and let $G \cong \PG(R_{p}(k-1),p)$ be a matroid with $F \subseteq E(G)$ and $G|F = M|F$. Then $E(G) \setminus A$ is a hyperplane in $G$, and so $G \del A \cong \PG(R_{p}(k-1)-1,p)$. By the Geometric Ramsey Theorem, there is a rank-$(k-1)$ flat $H$ in $G \del A$ such that either $H \subseteq E(M)$ or $H \subseteq E(G) \setminus E(M)$. Let $e \in A$ and let $F = E(M) \cap \cl_{G}(H \cup \{e\})$. Then $F$ is a rank-$k$ flat in $M$, and $M|F$ is either an affine or projective geometry.

{\flushleft \textbf{Case 2.} {\em There is a rank-$m$ flat $F$ of $M$ such that $M|F$ is not connected.} }

Let $N$ be a smallest component of $M|F$, and let $F'=F \setminus E(N)$. Then $r_{M}(F') \geq N_{p}(k,t-1)$, and so, by induction, we may assume that $M|F'$ has a rank-$(t-1)$ independent flat $I$. Then for any $e \in E(N)$, $I \cup \{e\}$ is a rank-$t$ independent flat in $M$.
\end{proof}


\begin{thebibliography}{}



\bibitem{Barak}
B. Barak, Z. Dvir, A. Wigderson, A. Yehudayoff, Rank bounds for design matrices with applications to combinatorial geometry and locally correctable codes. Proceedings of the 43rd annual ACM symposium on theory of computing, STOC 2011, San Jose, CA, ACM, New York (2011) 519-528.

\bibitem{BDSS}
A. Bhattacharyya, Z. Dvir, A. Shpilka, S. Saraf, tight lower bounds for 2-query lccs over finite fields. Proceedings of FOCS 2011 (2011) 638-647.

\bibitem{BoMo}
P. Borwein, W.O.J Moser, A survey of Sylvester's problem and its generalizations,
{\it{Aequationes Mathematicae}} 40 (1990) 111-135.

\bibitem{BoEd}
W. Bonnice, M. Edelstein, Flats associated with the finite sets in $\mathbb{P}^d$, {\it{Nieuw Archief voor Wiskunde}} 15 (1967) 11-14.

\bibitem{Dvir}
Z. Dvir, S. Saraf, A. Wigderson, Improved rank bounds for design matrices and a new proof of Kelly's theorem. Forum of Mathematics, Sigma (Vol. 2), Cambridge University Press (2014). 

\bibitem{Gallai}
T. Gallai, Solution to Problem 4065, {\it{American Mathematical Monthly}} 51 (1944) 169--171.

\bibitem{GeKr}
J. Geelen and M.E. Kroeker, A Sylvester-Gallai-type theorem for complex-representable matroids. arXiv preprint arXiv:2212.03307 (2022).

\bibitem{Graham}
R.L. Graham, K. Leeb, B.L. Rothschild, Ramsey's theorem for a class of categories, {\it{Advances in Mathematics}}, 8(3) (1972) 417-433.

\bibitem{Hales}
A.W. Hales and R.I. Jewett, Regularity and positional games, {\it{Transactions of the American Mathematical Society}}, 106(2) (1963) 222-229.

\bibitem{Hansen}
S. Hansen, A generalization of a theorem of Sylvester on the lines determined by a finite point set, {\it{Mathematica Scandinavica}} 16 (1965) 175-180.

\bibitem{Hirzebruch}
F. Hirzebruch, Arrangements of lines and algebraic surfaces. In Arithmetic and Geometry (Papers dedicated to I. R. Shafarevich), vol. 2, 113-140, Birkhauser (1983).

\bibitem{Kelly}
L.M. Kelly, A resolution of the Sylvester-Gallai problem of J.-PP. Serre, {\it{Discrete and Computational Geometry}} 1 (1986) 101-104.

\bibitem{Kung}
J.P.S. Kung, The long-line graph of a combinatorial geometry. II. Geometries representable over two fields of different characteristics, {\it{Journal of Combinatorial Theory, Series B}} 50(1) (1990) 41-53.

\bibitem{Murty}
U.S.R. Murty, Matroids with Sylvester property, {\it{Aequationes Mathematicae}} 4 (1970) 44-50.

\bibitem{Sylvester}
J.J. Sylvester, Mathematical Question 11851, {\it{Educational Times}} 59 (1893) 98.







\end{thebibliography}
\end{document}